\documentclass[a4]{amsart}
\usepackage{amssymb,amsfonts}
\usepackage{euscript,mathrsfs}
\usepackage{latexsym}
\usepackage{amscd}
\usepackage{amsmath}
\usepackage{a4}
\usepackage{epsf}
\usepackage{color}
\usepackage{graphicx}
\usepackage{amsopn}
\usepackage{mathptmx}
\usepackage{bbm}
\usepackage{microtype}

\providecommand{\abs}[1]{\left\lvert#1\right\rvert} 

\newcommand{\comment}[1]{ }


\renewcommand{\subset}{\subseteq}  
\renewcommand{\supset}{\supseteq}
\newcommand{\defas}{\mathrel{\mathop{:}}=}   
\DeclareMathOperator*{\bigtimes}{\textnormal{\Large $\times$}} 
\DeclareMathOperator*{\supp}{supp}  
\DeclareMathOperator*{\conv}{conv}  
\newcommand{\set}[1]{\left\lbrace #1 \right\rbrace} 

\theoremstyle{plain}
\newtheorem{theorem}{Theorem}
\newtheorem{lemma}[theorem]{Lemma}

\newtheorem*{corollary}{Corollary}

\theoremstyle{definition}
\newtheorem{definition}[theorem]{Definition}

\newtheorem{example}[theorem]{Example}

\theoremstyle{remark}
\newtheorem*{remark}{Remark}

\begin{document}

\title{Neighborliness of Marginal Polytopes}

\author{Thomas Kahle}

\address{Max Planck Institute for Mathematics in the Sciences\\
  Inselstrasse 22 \\
  D-04103 Leipzig, Germany \\
  kahle@mis.mpg.de
}

\date{\today}

\begin{abstract}
  A neighborliness property of marginal polytopes of hierarchical
  models, depending on the cardinality of the smallest non-face of the
  underlying simplicial complex, is shown. The case of binary variables
  is studied explicitly, then the general case is reduced to the
  binary case. A Markov basis for binary hierarchical models whose
  simplicial complexes is the complement of an interval is given.

  \keywords{Marginal Polytopes \and Markov Bases \and Hierarchical Models} 
  
\end{abstract}

\maketitle

\section{Introduction}
\label{sec:introduction}
The marginal polytope is an interesting combinatorial object that
appears in statistics\cite{hostensullivant02a}, coding theory
\cite{wainwright03:_variat_infer_in_graph_model,feldman03:_using_linear_progr_to_decod_linear_codes,KahleWenzelAy08}
and, under a different name, in toric algebra \cite{fulton93toric}. It
encodes in its face lattice the complete combinatorial information
about the boundary of certain statistical models. To define it we have
to take a very brief excursion to statistics, namely the theory of
hierarchical models for contingency tables.

Consider a collection of $n$ random variables taking values in finite
sets $\mathcal{X}_{i}, i=1,\ldots,n$. We denote $N\defas
\set{1,\ldots,n}$, and its power set as $2^{N} \defas \set{B: B
  \subset N}$. For a subset $B\subset N$ of the variables, we denote
its set of values as $\mathcal{X}_B \defas \bigtimes_{i\in
  B}\mathcal{X}_{i}$, and abbreviate $\mathcal{X} \defas
\mathcal{X}_{N}$. We have the natural projections
\begin{equation}
  \begin{aligned}
    X_B : \mathcal{X} & \to \mathcal{X}_B \\
    (x_i)_{i\in N} & \mapsto (x_i)_{i\in B} =: x_{B}.
  \end{aligned}
\end{equation}
We slightly abuse notation and denote $x_{B}$ the projection of $x$,
which is a function of $x$, and by the same symbol an arbitrary
element $x_{B} \in \mathcal{X}_{B}$. A contingency table is a
function $u :{\mathcal{X}} \to \mathbb{N}_{0}$. It is thereby a vector
in the space $ \mathbb{N}_{0}^{\mathcal{X}}$. For $B \subset N$ we
define the marginal table $u_B \in \mathbb{N}_{0}^{\mathcal{X}_B}$ as
the vector with components
\begin{equation}
  u_B(x_B) \defas \sum_
  {y : X_{B}(y) = x_{B} }  u(y).
\end{equation}
A so called hierarchical model for contingency tables can be given by a
simplicial complex $\Delta$ on the set $N$ of variable indexes
\cite{hostensullivant02a,develinsullivant03}. The facets $\mathcal{F}$
of $\Delta$ are defined as the inclusion maximal faces. They determine
the marginal map:
\begin{equation}
\begin{aligned}
  \pi_\Delta : \mathbb{R}^{\mathcal{X}} & \to \bigoplus_{F \in
    \mathcal{F}} \mathbb{R}^{\mathcal{X}_F} \\
  u & \mapsto \left(u_F\right)_{F \in \mathcal{F}}.
\end{aligned}
\end{equation}
It is a linear map computing all marginal tables corresponding to
facets.  We define cylinder sets denoting for $B\subset N$, and $y_B \in
\mathcal{X}_B$
\begin{equation}
  \set{X_B = y_B} \defas \set {x \in \mathcal{X} : X_{B}(x) = y_B}. 
\end{equation}
With respect to the canonical basis, the matrix representing
$\pi_\Delta$ is the $d \times \abs{\mathcal{X}}$ matrix
\begin{equation}
\label{eq:A-matrix-defn}
  A_{\Delta} \defas (A_{(B,y_B),x})_{(B,y_B),x} \text{ where } A_{(B,y_B),x}
  \defas   \begin{cases}
    1 & \text{ if  } X_{B}(x) = y_B \\
    0 & \text{ otherwise}.
  \end{cases} 
\end{equation}
The rows of this matrix are indexed by pairs $(B,y_B)$, where $B \in
\mathcal{F}$ is a facet of $\Delta$ and $y_B \in \mathcal{X}_B$ is a
configuration on $B$. Then $d$ is defined as the number of such
pairs. If the simplicial complex is clear, we will sometimes omit the
index $\Delta$.

\begin{definition}[Marginal Polytope]
  The \emph{marginal polytope} is the convex hull of the columns of
  $A_{\Delta}$:
  \begin{equation}
    \label{eq:def_marginal_polytope}
    Q_{\Delta} \defas \conv \set{A_{x} : x \in \mathcal{X}} 
    \subset \mathbb{R}^{d}.
  \end{equation}
\end{definition}

\begin{example}[Two independent binary variables] \sloppy
  In the case of two binary variables, we have $\mathcal{X} =
  \set{(00),(01),(10),(11)}$. Let $\Delta = \set{\set{1},\set{2}}$,
  then the matrix $A_{\Delta}$ is given as
  \begin{equation}
  A_{\Delta} = 
  \begin{pmatrix}
    1 & 1 & 0 & 0 \\
    0 & 0 & 1 & 1 \\
    1 & 0 & 1 & 0 \\
    0 & 1 & 0 & 1 
  \end{pmatrix}.
  \end{equation}
  The columns are ordered as
  $(\set{1},0),(\set{1},1),(\set{2},0),(\set{2},1)$. If $\Delta$ was
  the whole power set, $A_{\Delta}$ would be the $4 \times 4$ identity matrix. 
  The marginal polytopes are easily identified as a 2-dimensional
  square and a 3-dimensional simplex respectively.
\end{example}
Our object of interest is the toric ideal:
\begin{equation}
  \mathcal{I}_\Delta \defas \left\langle p^{u} - p^{v} :
    u,v\in \mathbb{N}_{0}^{\mathcal{X}},\pi_\Delta(u) = 
    \pi_\Delta (v)\right\rangle.
\end{equation}
Here, we used the standard notation for monomials in the variables
$p_{x}, x\in \mathcal{X}$, namely $p^u \defas \prod_{x\in \mathcal{X}}
p_x^{u(x)}$. Throughout the whole paper we use the convention that
$0^{0} = 1$. The set of indexes with non-vanishing exponent will be
called the \emph{support} of the binomial $\supp (p^{u} - p^{v})
\defas \set{x \in \mathcal{X} : u(x) + v(x) > 0}$. The supports of $u$
and $v$ will also be called the \emph{positive} respectively
\emph{negative support} of the binomial. The ideal
$\mathcal{I}_{\Delta}$ is a homogeneous prime ideal in the polynomial
ring $\mathbb{C}[p_{x} : x\in\mathcal{X}]$. In statistics the
restriction of the corresponding variety to the non-negative real
cone, would be called the closure of an \emph{exponential
  family}. This seminal observation is the cornerstone of what is now
called algebraic statistics
\cite{diaconissturmfels98,geigermeeksturmfels06,pachter05:_algeb_statis_for_comput_biolog}.

A first task is to find a suitable finite generating set of this
ideal. Very useful is a Markov basis defined as follows:
\begin{definition}
  A finite set $M \subset \ker_{\mathbb{Z}} \pi_\Delta$ is
  called a \emph{Markov basis} for the hierarchical model $\Delta$ if
  for each two contingency tables $u,v \in \mathbb{N}_{0}^{\mathcal{X}}$
  with equal marginals $\pi_\Delta(u) = \pi_\Delta(v)$ there exists a
  sequence $m_i, i=1,\ldots,l$ in $ \pm M$ such that
  \begin{equation}
    u = v +\sum_{i=1}^l m_i,
  \end{equation}
  where
  \begin{equation}
    v + \sum_{i=1}^k m_i \in \mathbb{N}_{0}^{\mathcal{X}} \text{ for all
      $k=1,\ldots,l$}.
  \end{equation}
\end{definition}
The crucial property of a Markov basis is that any two tables, having
the same marginals, can be connected without leaving the non-negative
cone. A key theorem is, that exactly a Markov basis gives
the desired set of generators:
\begin{theorem}[\cite{diaconissturmfels98}]
\label{sec:sturmfelsmarkovtheorem}
  A finite set $M$ is a Markov basis if and only if
  \begin{equation}
    \mathcal{I}_\Delta = \left\langle p^{m^+} - p^{m^-} : m \in M \right\rangle,
  \end{equation}
  where $m^+(x) \defas \max \set{0,m(x)}$, $m^-(x) \defas \max
  \set{0,-m(x)}$, such that $m = m^+ - m^-$.
\end{theorem}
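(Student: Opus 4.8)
The plan is to prove both implications, writing $I_M \defas \langle p^{m^+} - p^{m^-} : m\in M\rangle$ for the ideal generated by the candidate moves. That $I_M \subseteq \mathcal{I}_\Delta$ is immediate: each $m\in M$ lies in $\ker_{\mathbb{Z}}\pi_\Delta$, so $\pi_\Delta(m^+)=\pi_\Delta(m^-)$ and $p^{m^+}-p^{m^-}$ is one of the binomials defining $\mathcal{I}_\Delta$. The real content is the equivalence between ``$M$ connects every fiber'' and the reverse inclusion $\mathcal{I}_\Delta\subseteq I_M$.

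For ``Markov basis $\Rightarrow$ generation'' I would show that every generating binomial $p^{u}-p^{v}$ of $\mathcal{I}_\Delta$, i.e.\ one with $\pi_\Delta(u)=\pi_\Delta(v)$, already lies in $I_M$. The Markov property yields a sequence $v=v_0,v_1,\ldots,v_l=u$ of non-negative tables with $v_{j}-v_{j-1}=\pm m_{j}$ for some $m_j\in M$. Telescoping gives $p^{u}-p^{v}=\sum_{j=1}^{l}\bigl(p^{v_{j}}-p^{v_{j-1}}\bigr)$, so it suffices to treat a single step. Setting $w_{j}\defas\min(v_{j-1},v_{j})$ componentwise, which is again a non-negative integer table since both neighbours are, one checks directly that $p^{v_{j}}-p^{v_{j-1}}=\pm\, p^{w_{j}}\bigl(p^{m_{j}^{+}}-p^{m_{j}^{-}}\bigr)\in I_M$. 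Summing, $p^{u}-p^{v}\in I_M$, and since such binomials generate $\mathcal{I}_\Delta$ by definition we obtain $\mathcal{I}_\Delta\subseteq I_M$.

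The converse ``generation $\Rightarrow$ Markov basis'' is where the work lies. Fix $u,v$ with $b\defas\pi_\Delta(u)=\pi_\Delta(v)$; I must connect them inside the fiber $\mathcal{F}_{b}\defas\set{w\in\mathbb{N}_{0}^{\mathcal{X}}:\pi_\Delta(w)=b}$, which is finite because the all-ones vector lies in the row space of $A_\Delta$ (the rows for a single facet sum to it), so the total count $\sum_{x}w(x)$ is already fixed by $b$. From $p^{u}-p^{v}\in\mathcal{I}_\Delta=I_M$ write $p^{u}-p^{v}=\sum_{i}h_{i}\bigl(p^{m_{i}^{+}}-p^{m_{i}^{-}}\bigr)$. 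The key preparatory step is to homogenize this identity with respect to the toric grading $w\mapsto\pi_\Delta(w)$, under which $\mathcal{I}_\Delta$ is homogeneous: expanding each $h_i$ into monomials and keeping only the contributions of degree $b$, I may assume every summand has the form $c\,p^{w}\bigl(p^{m^{+}}-p^{m^{-}}\bigr)$ with $c\in\mathbb{C}$ and both $w+m^{+}$ and $w+m^{-}$ lying in $\mathcal{F}_{b}$.

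It then remains to read connectivity off this homogeneous identity. Regard the monomials $p^{w}$, $w\in\mathcal{F}_{b}$, as a basis of a $\mathbb{C}$-vector space and form the graph $G$ on vertex set $\mathcal{F}_{b}$ whose edges join tables differing by a single element of $\pm M$. Each surviving summand $p^{w+m^{+}}-p^{w+m^{-}}$ is the difference of the two adjacent vertices $w+m^{+},w+m^{-}\in\mathcal{F}_b$, both non-negative, so each is a legal single Markov step. The identity thus exhibits $p^{u}-p^{v}$ as a complex linear combination of edge-difference vectors of $G$. By the standard fact that the span of these differences is exactly the space of functions summing to zero on each connected component, $u$ and $v$ must lie in the same component of $G$; any path between them unwinds into the required sequence of non-negativity-preserving moves. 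The main obstacle is precisely this last passage—manufacturing a genuine, sign-respecting walk out of an algebraic cancellation—together with the homogenization that makes the edge bookkeeping exact.
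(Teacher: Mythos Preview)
The paper does not actually prove this theorem: it is stated with a citation to Diaconis--Sturmfels and used as a black box, so there is no ``paper's own proof'' to compare against. Your argument is the standard one and is correct in both directions. In particular, your worry in the last sentence is not an obstacle: after homogenization every surviving term $p^{w+m^{+}}-p^{w+m^{-}}$ has both exponents in $\mathcal{F}_b\subset\mathbb{N}_0^{\mathcal{X}}$, so the graph $G$ already lives entirely in the non-negative cone, and once the linear-algebra fact forces $u$ and $v$ into the same component, any graph path between them is automatically a non-negativity-preserving Markov walk---no further ``sign-respecting'' work is needed.
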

The elements in a Markov basis are referred to as \emph{Markov moves}.
In the following section we will give a lower bound on the cardinality
of the positive and negative support of any move.

\section{A lower degree bound}
\label{sec:main-theorem}
\begin{theorem}
\label{sec:main-theorem-thm}
Let $\Delta$ be a simplicial complex on $N$ and $\mathcal{I}_\Delta$
the corresponding toric ideal. Let $g$ be the minimal cardinality of a
non-face of $\Delta$.  Each generator of $\mathcal{I}_{\Delta}$ has
degree at least $2^{g-1}$. Moreover, the positive and negative
supports of each generator both have cardinality bigger or equal to
$2^{g-1}$.  The degree bound is realized only by square free
binomials.
\end{theorem}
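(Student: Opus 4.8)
The plan is to prove the slightly stronger statement that the bound $\abs{\supp(m^+)}, \abs{\supp(m^-)} \geq 2^{g-1}$ holds for \emph{every} nonzero $m \in \ker_{\mathbb{Z}}\pi_\Delta$, not only for generators; the degree bound and the square-free claim then drop out by counting. First I would reformulate kernel membership: since a facet marginal determines every coarser one ($m_B = (m_F)_B$ whenever $B \subset F$), the condition $m \in \ker_{\mathbb{Z}}\pi_\Delta$ is equivalent to the vanishing of the signed marginal $m_B$ for \emph{every} face $B \in \Delta$, and the degree of the associated binomial is $\sum_{x} m^+(x) = \sum_{x} m^-(x)$.

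The combinatorial heart is the following lemma. Let $B \subset N$ and let $f \colon \mathcal{X}_B \to \mathbb{R}$ be nonzero with all proper marginals $f_T$, $T \subsetneq B$, vanishing; then $f$ is strictly positive on at least $2^{\abs{B}-1}$ points of $\mathcal{X}_B$, and strictly negative on at least $2^{\abs{B}-1}$ points. I would prove this by induction on $\abs{B}$. The base case $\abs{B}=1$ is exactly the fact that a nonzero vector summing to zero has a positive and a negative entry. For the step, fix $i \in B$, put $B' = B \setminus \set{i}$, and slice $f$ into the functions $f_a(w) \defas f(w,a)$ on $\mathcal{X}_{B'}$, one for each value $a \in \mathcal{X}_i$. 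A short computation identifies the marginal of a slice over $T \subsetneq B'$ with the marginal $f_{T \cup \set{i}}$ of $f$ over the proper subset $T \cup \set{i} \subsetneq B$, which vanishes; hence every nonzero slice satisfies the induction hypothesis. Because $\sum_{a} f_a = f_{B'} = 0$, a single nonzero slice is impossible, so at least two slices are nonzero; as the slices have pairwise disjoint domains, their positive supports are disjoint, and the induction hypothesis supplies at least $2^{\abs{B}-2}$ positive points in each, for a total of at least $2 \cdot 2^{\abs{B}-2} = 2^{\abs{B}-1}$. The negative count is identical.

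To assemble the theorem, take any nonzero $m \in \ker_{\mathbb{Z}}\pi_\Delta$ and choose $B$ inclusion-minimal among the sets with $m_B \neq 0$; such a $B$ exists because $m_N = m \neq 0$. By minimality every proper marginal of $f \defas m_B$ vanishes, so the lemma shows $f$ is positive on at least $2^{\abs{B}-1}$ points of $\mathcal{X}_B$; lifting through the projection $\mathcal{X} \to \mathcal{X}_B$ — each point where $m_B > 0$ has in its fibre some $x$ with $m(x) > 0$ — yields $\abs{\supp(m^+)} \geq 2^{\abs{B}-1}$, and symmetrically $\abs{\supp(m^-)} \geq 2^{\abs{B}-1}$. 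The decisive point is that $B$ must be a \emph{non-face}: faces have vanishing marginal, so $m_B \neq 0$ forces $B \notin \Delta$, whence $\abs{B} \geq g$ and both supports have at least $2^{g-1}$ elements. Since the degree equals $\sum_x m^+(x) \geq \abs{\supp(m^+)} \geq 2^{g-1}$, the degree bound follows, and equality in it forces every positive and negative multiplicity to equal $1$, \ie the binomial is square free.

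The only genuine obstacle is the lemma, and inside it the bookkeeping of the inductive step — precisely the two facts that at least two slices survive and that their supports are truly disjoint; everything else is formal. In the binary case the lemma sharpens considerably: vanishing of all codimension-one marginals forces $f(z) = (-1)^{\abs{z}} c$, the full checkerboard on all $2^{\abs{B}}$ points, which is presumably the route taken when the binary case is treated first and arbitrary alphabets are reduced to it. The induction sketched above has the advantage of handling general $\mathcal{X}_i$ in one stroke, so I would either present it directly or use it to justify the reduction.
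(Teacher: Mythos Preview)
Your argument is correct and complete; the inductive lemma on functions with vanishing proper marginals does the job cleanly, and the lifting step through the fibres of $X_B$ is sound. The paper, however, takes a genuinely different route: it first treats the binary case by expanding every $m \in \ker_{\mathbb{Z}}\pi_\Delta$ in the character basis $\{e_B : B \in \Delta^c\}$, then uses a pigeonhole argument on cylinder sets together with summation identities for the $e_B$ to show that too few positive (or negative) components force all coefficients to vanish; the general alphabet case is afterwards reduced to the binary one via collapsing maps $\phi_i : \mathcal{X}_i \to \{0,1\}$ that push kernel elements forward while only shrinking supports. Your induction is more elementary and handles arbitrary $\mathcal{X}_i$ in one stroke, with no characters and no reduction step. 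What the paper's approach buys in exchange is that the character basis simultaneously produces the explicit extremal moves $m_{\mathcal{G}}^{y_{N\setminus G}}$ that realize the bound, and these moves are reused later to exhibit a full Markov basis for the complexes $\Delta_{/G}$; your proof certifies the bound but does not hand you those witnesses.
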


\begin{remark}
  Note that we give a lower bound on the -smallest- degree among the
  generators. Lower bound on the largest degree have been considered
  for a measure of complexity of the model for instance in
  \cite{geigermeeksturmfels06}. There, it is shown that one finds a
  simplicial complex on $2n$ units, such that there exists a generator
  of degree $2^{n}$. Furthermore, in \cite{develinsullivant03} the
  authors study an algorithm which, for graph models, computes all
  generators of a given degree. Finally, in
  \cite{loera06:_markov_bases_of_three_way} the case of 2-margins of
  $(r,s,3)$-tables is studied. It is shown that as $r$ and $s$ grow
  the support and degree of a maximal generator cannot be
  bounded. This has interesting implications for data disclosure.
\end{remark}

\begin{remark}[Graph models]
  A graph model is a hierarchical model for which $\dim \Delta \leq 1$
  holds.  If its graph is not complete, the bound reduces to the
  trivial bound $\deg m \geq 2$. On the other hand, for the complete
  graph, there are no quadratic generators.
\end{remark}

\begin{remark}[Type of generators]
  The vectors that achieve the bound (see Lemma
  \ref{sec:supp-mark-basis-eA-lemma}) are natural generalizations of
  the quadratic Markov moves for the independence model
  \cite{diaconissturmfels98}.
\end{remark}

We will prove Theorem \ref{sec:main-theorem-thm} in two steps. First,
the binary case is studied explicitly. Then the general case is
reduced to the binary case.

\subsection{The binary case}
\label{sec:binary-case}
In this section we have $\mathcal{X} = \set{0,1}^{N}$. This will allow
us to use a special orthogonal basis of $\ker_\mathbb{Z}
A_{\Delta}$. Using this, we find that any element in the kernel
has a lower bound for the cardinality of its support.

Put $\Delta^{c} \defas 2^{N} \setminus \Delta$ the set of
non-faces of $\Delta$. For elements $G \in \Delta^{c}$ we define the
upper intervals
\begin{equation}
  \left[G, N\right] \defas \set{B \subset N : B \supset G}
\end{equation}
which are contained in $\Delta^{c}$.  Next, for each $B \subset N$ we
define a vector $e_{B} \in \mathbb{R}^{\mathcal{X}}$ with components:
\begin{equation}
  e_{B} (x) \defas (-1)^{E(B,x)}
\end{equation}
where $E(B,x) \defas\abs{\set{i\in B : x_{i} = 1}}$ is the number of
entries equal to one that $x$ has in $B$. Observe, that $e_{B}$ depends on its
argument only through $x_{B}$, the part in $B$.  Therefore we will
sometimes abuse notation and write $e_{B}(x_{B})$ for the value of
$e_{B}$ at any configuration which projects to $x_{B}$. We have
\begin{lemma}[\cite{kahleay06}] The set $\set{e_{B} : B\subset N}$ is an orthogonal basis
  of $\mathbb{R}^{\mathcal{X}}$ such that $\set{e_{B} : B \in
    \Delta^{c}}$ is a basis of $\ker_{\mathbb{Z}} A_{\Delta}$. 
\end{lemma}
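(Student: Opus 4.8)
The plan is to treat the $e_{B}$ as the characters of the group $(\mathbb{Z}/2)^{N}$ acting on $\mathcal{X} = \set{0,1}^{N}$, exploit their multiplicativity to obtain orthogonality, and then split the basis into the two pieces indexed by $\Delta$ and $\Delta^{c}$ by means of a single marginal identity. First I would record that $e_{B}(x) = \prod_{i\in B}(-1)^{x_{i}}$, so that $e_{B}(x)\,e_{B'}(x) = e_{B \triangle B'}(x)$ pointwise. Summing over $\mathcal{X}$ and factoring coordinatewise gives $\sum_{x\in\mathcal{X}} e_{C}(x) = 2^{\abs{N}}$ if $C = \emptyset$ and $0$ otherwise, since a single coordinate $j\in C$ already forces the vanishing factor $\sum_{x_{j}\in\set{0,1}}(-1)^{x_{j}} = 0$. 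Hence $\langle e_{B}, e_{B'}\rangle = 2^{\abs{N}}$ for $B = B'$ and $0$ otherwise, so the $2^{\abs{N}}$ pairwise orthogonal nonzero vectors $\set{e_{B}:B\subset N}$ form an orthogonal basis of $\mathbb{R}^{\mathcal{X}}$.

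For membership, fix $B\in\Delta^{c}$ and a facet-row $(F,y_{F})$ of $A_{\Delta}$. The entry $(A_{\Delta}e_{B})_{(F,y_{F})} = \sum_{x:\,x_{F}=y_{F}} e_{B}(x)$ ranges freely over the coordinates outside $F$. Since $B$ is a non-face it is contained in no facet, so I may choose $j\in B\setminus F$; the free sum over $x_{j}$ again contributes the vanishing factor above, and the entry is $0$. As the $e_{B}$ have entries $\pm1$, this shows $\set{e_{B}:B\in\Delta^{c}}\subset\ker_{\mathbb{Z}}A_{\Delta}$, and their linear independence is inherited from the orthogonal basis.

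For the converse inclusion I would use the identity obtained by writing $(-1)^{x_{i}} = 1-2x_{i}$ and expanding: $\langle w,e_{B}\rangle = \sum_{T\subset B}(-2)^{\abs{T}}m_{T}(w)$, where $m_{T}(w)\defas\sum_{x:\,x_{i}=1\,\forall i\in T}w(x)$ is the value of the $T$-marginal of $w$ at the all-ones cell. If $w\in\ker A_{\Delta}$, then every face-marginal of $w$ vanishes, so $m_{T}(w) = 0$ for $T\in\Delta$. In particular, if $B\in\Delta$ then every $T\subset B$ is a face by downward closure, whence $\langle w,e_{B}\rangle = 0$; expanding $w$ in the orthogonal basis, this forces all $\Delta$-coefficients to vanish, i.e. $w\in\spann\set{e_{B}:B\in\Delta^{c}}$. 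Combined with the previous paragraph, $\set{e_{B}:B\in\Delta^{c}}$ is a basis of the kernel.

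The delicate point, which I expect to be the main obstacle, is upgrading this to the integral statement: one must verify that the coefficients $c_{B} = 2^{-\abs{N}}\langle w,e_{B}\rangle$ of an arbitrary integer kernel vector $w$ are themselves integers, so that the $e_{B}$ generate $\ker_{\mathbb{Z}}A_{\Delta}$ over $\mathbb{Z}$ and not merely over $\mathbb{Q}$. The same identity reduces this to a $2^{\abs{N}}$-divisibility of $\sum_{T\subset B,\,T\in\Delta^{c}}(-2)^{\abs{T}}m_{T}(w)$, in which only the non-face truncations survive, and the real work is to extract enough divisibility from the vanishing of all face-marginals of $w$ (equivalently, to show that the lattice $\spann_{\mathbb{Z}}\set{e_{B}:B\in\Delta^{c}}$ is saturated in $\mathbb{Z}^{\mathcal{X}}$). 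I expect this divisibility bookkeeping, rather than the orthogonality or the dimension count, to carry the weight of the argument.
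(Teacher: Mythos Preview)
The paper does not prove this lemma; it is quoted from \cite{kahleay06}, and the only argument given in the present paper is the remark that the $e_{B}$ are the characters of $(\mathbb{Z}/2\mathbb{Z})^{n}$. Your orthogonality argument is exactly that remark made explicit, and your proof that $\spann_{\mathbb{R}}\set{e_{B}:B\in\Delta^{c}}=\ker_{\mathbb{R}}A_{\Delta}$ is correct and self-contained; the identity $\langle w,e_{B}\rangle=\sum_{T\subset B}(-2)^{\abs{T}}m_{T}(w)$ is a clean way to see the converse inclusion.

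Your instinct about the integral upgrade is well placed, but the resolution is not more bookkeeping: the literal $\mathbb{Z}$-basis claim is \emph{false}. Take $n=3$, $\Delta=\set{\emptyset,\set{1},\set{2},\set{3}}$, and the integer kernel vector $w=(1,-1,-1,1,0,0,0,0)$ (indices $000,\ldots,111$). Expanding in the orthogonal basis gives $w=\tfrac{1}{2}e_{\{2,3\}}+\tfrac{1}{2}e_{\{1,2,3\}}$, so the $e_{B}$ with $B\in\Delta^{c}$ generate $\ker_{\mathbb{Z}}A_{\Delta}$ only over $\mathbb{Q}$, not over $\mathbb{Z}$. The lattice $\spann_{\mathbb{Z}}\set{e_{B}:B\in\Delta^{c}}$ has index $2^{\,?}>1$ in $\ker_{\mathbb{Z}}A_{\Delta}$ in general, so the divisibility you were hoping to extract simply is not there.

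This is an imprecision in the lemma's wording rather than a defect in your argument. The only place the paper uses the lemma is Lemma~\ref{sec:positive-support-is-bounded-from-below}, where an arbitrary $m\in\ker\pi_{\Delta}$ is written as $\sum_{B\in\Delta^{c}}z^{B}e_{B}$ with real coefficients $z^{B}$; integrality of the $z^{B}$ is never invoked. So the statement you should prove (and have proved) is that $\set{e_{B}:B\in\Delta^{c}}$ is an $\mathbb{R}$-basis of $\ker_{\mathbb{R}}A_{\Delta}$, and you can drop the last paragraph of your proposal entirely.
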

\begin{remark}[Characters]
  If we treat $\mathcal{X}$ as the additive group
  $\left(\mathbb{Z}\slash 2\mathbb{Z}\right)^{n}$ then the characters
  of this group form an orthonormal basis (with respect to the product
  induced by the Haar measure, which in this case is proportional to
  the standard product) of $\mathbb{C}^{\mathcal{X}}$. The characters
  are exactly given by the vectors $e_{B},B \subset N$. In our case
  the characters are real functions and also a basis of
  $\mathbb{R}^{\mathcal{X}}$.  See \cite{kahleay06,pontryagin66} for
  details.
\end{remark}

\begin{lemma}
\label{sec:supp-mark-basis-eA-lemma} Let $G \in\Delta^{c}$ and
$\mathcal{G}\defas \left[G,N\right]$. For $g \defas \abs{G}$ it holds
  \begin{equation}
    m_{\mathcal{G}}^{0}(x) \defas\sum_{B\in \mathcal{G}} e_{B}(x) =
    \begin{cases}
      2^{n - g}\, e_{G}(x_{G}) & \text{ if } x_{N
        \setminus G} = (0,\ldots,0) \\
     0 & \text{ otherwise }.
    \end{cases}
  \end{equation}
  Furthermore, for any $x_{C} \in \mathcal{X}_{C}$ we have the
  identity
  \begin{equation}
    \label{eq:eb-sum-identity}
    \sum_{x\in \set{X_C = y_C}} e_B(x) =
    \begin{cases}
      2^{n-\abs{C}} e_B(y_C) & \text{ if } B\subset C\\
      0 & \text{otherwise.}
    \end{cases}
  \end{equation}
\end{lemma}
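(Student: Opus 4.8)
The plan is to reduce both identities to the multiplicative structure of the characters $e_B$ together with the two elementary sums $\sum_{t\in\{0,1\}}(-1)^t = 0$ and $\sum_{t\in\{0,1\}}1 = 2$. The starting point is the observation that $e_B(x) = \prod_{i\in B}(-1)^{x_i}$, so that for disjoint index sets the character factorizes, $e_{B\cup B'} = e_B\cdot e_{B'}$. Every sum appearing in the lemma is then split along a partition of $N$ into coordinates that are held fixed and coordinates that are summed over freely; the free part factorizes into a product of the two elementary sums above, and each factor is either $2$ or $0$.

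First I would prove the fiber identity \eqref{eq:eb-sum-identity}, since the first statement will follow from the same factorization. Fix $y_C\in\mathcal{X}_C$ and note that the fiber $\set{X_C = y_C}$ is obtained by letting the coordinates in $N\setminus C$ range freely while those in $C$ are pinned to $y_C$. Splitting $B = (B\cap C)\cup(B\setminus C)$ and using multiplicativity, $e_B(x) = e_{B\cap C}(y_C)\,e_{B\setminus C}(x)$ for every $x$ in the fiber, so the factor $e_{B\cap C}(y_C)$ is constant on the fiber and pulls out of the sum. What remains is $\sum_{x_{N\setminus C}} e_{B\setminus C}(x)$, which factorizes over the free coordinates into a product whose factor at $i\in N\setminus C$ equals $\sum_{t\in\{0,1\}}(-1)^t = 0$ when $i\in B$ and $\sum_{t\in\{0,1\}}1 = 2$ when $i\notin B$. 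Hence the product vanishes unless $B\setminus C=\emptyset$, that is unless $B\subset C$, and in that case it equals $2^{n-\abs{C}}$, while $e_{B\cap C}(y_C) = e_B(y_C)$. This is exactly \eqref{eq:eb-sum-identity}.

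The first identity follows from the same factorization, now applied to the summation variable $B$ rather than to $x$. I would parametrize the upper interval $\mathcal{G}=[G,N]$ by $B = G\cup S$ with $S\subset N\setminus G$; multiplicativity gives $e_{G\cup S}(x) = e_G(x)\prod_{i\in S}(-1)^{x_i}$, so $e_G(x)$ pulls out and $m^{0}_{\mathcal{G}}(x) = e_G(x)\sum_{S\subset N\setminus G}\prod_{i\in S}(-1)^{x_i}$. Distributing the product over subsets turns the remaining sum into $\prod_{i\in N\setminus G}\bigl(1+(-1)^{x_i}\bigr)$, whose factor at $i$ is $2$ when $x_i=0$ and $0$ when $x_i=1$. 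Therefore the sum is $0$ unless $x_i=0$ for every $i\in N\setminus G$, and when $x_{N\setminus G}=(0,\dots,0)$ it equals $2^{n-g}$; recalling that $e_G(x)=e_G(x_G)$ gives the claimed value $2^{n-g}e_G(x_G)$.

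I do not expect a genuine obstacle here: both statements are bookkeeping around the single cancellation $1+(-1)=0$. The only point that needs care is the disjoint-union splitting of the index set and keeping straight which coordinates are fixed and which are free, so that the product factorization is applied to the correct index set in each case. Alternatively one could expand the indicator $\Ind{X_C = y_C}$ in the character basis and invoke the orthogonality stated in the preceding lemma, but the direct factorization above is shorter and makes the appearance of the powers of two transparent.
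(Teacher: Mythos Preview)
Your proof is correct and is essentially the same argument as the paper's, just phrased more explicitly: where you factorize $\sum_{S\subset N\setminus G}\prod_{i\in S}(-1)^{x_i}=\prod_{i\in N\setminus G}(1+(-1)^{x_i})$, the paper observes equivalently that for any $i\in N\setminus G$ with $x_i=1$ the sets in $[G,N]$ pair off into those containing $i$ and those not, giving cancelling terms. The paper treats the first identity first and dispatches \eqref{eq:eb-sum-identity} with ``the same argument,'' while you reverse the order and supply more detail, but the mathematical content is identical.
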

\begin{proof}
  For the second case assume we have $i\in N\setminus G$ such that
  $x_i=1$. Since half of the sets in $[G,N]$ contain $i$, while the
  other half does not contain $i$, it follows that the sum equals zero
  if such an $i$ exists. The first case is now clear. All the summands
  are equal to $e_G$ in this case, and there are exactly $2^{n-g}$
  terms. The identity (\ref{eq:eb-sum-identity}) follows by by the
  same argument. 
\end{proof} 
\begin{remark}
  By choosing appropriate signs in the sum, one can achieve any of the
  cylinder sets $\set{X_{N\setminus G} = x_{N\setminus G}}$
  instead of $\set{X_{N\setminus G} = 0}$ as the support. To be
  concrete, we have
  \begin{equation}
  \begin{aligned}
    \label{eq:markov-moves} m_{\mathcal{G}}^{y_{N\setminus G}}(x) &
    \defas \sum_{B\in \mathcal{G}} (-1)^{E(B,y_{N \setminus
        G})}e_{B}(x) \\ & = \begin{cases}
      2^{n - g}\, e_{G}(x_{G}) & \text{ if } x_{N \setminus G} = y_{N\setminus G} \\
      0  & \text{otherwise}.
      \end{cases}
    \end{aligned}
  \end{equation}
\end{remark}

The vectors we have just constructed have minimal support.  In
the following we will deduce a technical, but elementary statement
about large subsets of $\mathcal{X}$. In Lemma
\ref{sec:positive-support-is-bounded-from-below}, it will follow that
choosing $G$ minimal in $\Delta^{c}$, the value $2^{n} - 2^{\abs{G}}$,
as in Lemma \ref{sec:supp-mark-basis-eA-lemma}, is the maximal number
of zeros, which can be achieved by non-trivial linear combinations of
the vectors $e_B, B \in \Delta^{c}$. 
\begin{lemma}\label{sec:supp-mark-basis-card-lemma}
  Let $g \in \set{1,\ldots, n}$ be fixed.  
  For $\mathcal{Y} \subset \mathcal{X}$ with $\abs{\mathcal{Y}} > 2^n
  - 2^{g}$ the following statement holds:
  \begin{itemize} 
  \item For each $B \subset N$ with $\abs{B}\geq g$,
    $\mathcal{Y}$ contains one of the cylinder sets $\set{X_B =
      x_{B}}$. More formally: $\exists x_{B} \in \mathcal{X}_B$
    such that $\set{X_B = x_{B}} \subset \mathcal{Y}$.
  \end{itemize}
\end{lemma}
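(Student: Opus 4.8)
The plan is to argue by contradiction, exploiting that a fixed set $B$ partitions $\mathcal{X}$ into cylinder sets. I would fix any $B \subset N$ with $\abs{B} \geq g$ and observe that the cylinder sets $\set{X_B = x_B}$, as $x_B$ ranges over $\mathcal{X}_B$, are pairwise disjoint, cover $\mathcal{X}$, and number exactly $2^{\abs{B}}$, each of size $2^{n-\abs{B}}$. Then I would assume for contradiction that $\mathcal{Y}$ contains none of these blocks.

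Under this assumption every block $\set{X_B = x_B}$ must contain at least one point of the complement $\mathcal{X} \setminus \mathcal{Y}$, for otherwise that block would lie entirely inside $\mathcal{Y}$. Since the $2^{\abs{B}}$ blocks are disjoint, selecting one such point from each forces $\abs{\mathcal{X} \setminus \mathcal{Y}} \geq 2^{\abs{B}} \geq 2^{g}$, the last inequality because $\abs{B} \geq g$. On the other hand, the hypothesis $\abs{\mathcal{Y}} > 2^n - 2^g$ gives $\abs{\mathcal{X} \setminus \mathcal{Y}} = 2^n - \abs{\mathcal{Y}} < 2^g$, a contradiction. Hence some cylinder set $\set{X_B = x_B}$ is entirely contained in $\mathcal{Y}$.

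There is no genuine obstacle here: the whole content is the observation that the blocks indexed by $B$ are disjoint and exhaustive, after which the statement reduces to a one-line pigeonhole count. I would only remark that the binding case is $\abs{B} = g$, since enlarging $B$ merely strengthens the lower bound $2^{\abs{B}}$ on the size of the complement, and that the strict inequality in the hypothesis is sharp: distributing a single point of $\mathcal{X} \setminus \mathcal{Y}$ into each of the $2^g$ blocks of a fixed $g$-element set $B$ realizes $\abs{\mathcal{Y}} = 2^n - 2^g$ with no cylinder set contained in $\mathcal{Y}$, which matches exactly the number of zeros produced by the vectors in Lemma \ref{sec:supp-mark-basis-eA-lemma}.
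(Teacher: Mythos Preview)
Your argument is correct and is essentially identical to the paper's: both proceed by contradiction, choosing for each $x_B \in \mathcal{X}_B$ a point of $\mathcal{X}\setminus\mathcal{Y}$ in the corresponding cylinder, noting these are distinct, and deducing $\abs{\mathcal{Y}} \leq 2^n - 2^{\abs{B}} \leq 2^n - 2^g$. Your write-up is simply a more detailed version of the paper's one-paragraph pigeonhole count, with the additional (and accurate) remark on sharpness.
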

\begin{proof}
  The statement follows from a simple cardinality argument.  Assume
  the contrary, let $B$ be given, and $\forall x_B \in
  \mathcal{X}_B,\, \exists x \in \mathcal{X} \setminus \mathcal{Y}$
  such that $x_{B} = X_{B}(x)$. These $x$ are all distinct, since they
  differ on $B$. We find $\abs{\mathcal{Y}} \leq 2^{n} - 2^{g}$. 
\end{proof}

\begin{lemma}
\label{sec:positive-support-is-bounded-from-below}
  Let $g$ denote the minimal cardinality among the sets in
  $\Delta^{c}$. Then any non-zero linear combination of the vectors
  $e_{B}, B\in \Delta^{c}$ has at least $2^{g-1}$ positive and
  $2^{g-1}$ negative components.
\end{lemma}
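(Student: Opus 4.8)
The plan is to reduce the statement to a single, carefully chosen marginal of the given combination, exploiting the fact that the vanishing of all smaller marginals forces that marginal to collapse to one character. Write the non-zero combination as $v \defas \sum_{B\in\Delta^{c}} c_{B}\, e_{B}$, and let $B_{0}\in\Delta^{c}$ be a set of minimal cardinality $g_{0}\defas\abs{B_{0}}$ among those with $c_{B_{0}}\neq 0$. Since $B_{0}$ is a non-face and $g$ is the minimal cardinality occurring in $\Delta^{c}$, we have $g_{0}\geq g$; moreover $B_{0}\neq\emptyset$ because $\emptyset\in\Delta$, so $g_{0}\geq 1$. I would then aim to show that $v$ has at least $2^{g_{0}-1}\geq 2^{g-1}$ positive and $2^{g_{0}-1}\geq 2^{g-1}$ negative components.

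The first step is to compute the marginal $v_{B_{0}}$ on $B_{0}$ using \eqref{eq:eb-sum-identity}. Summing that identity against the basis expansion of $v$, every term with $B\not\subset B_{0}$ drops out, and every term with $B\subsetneq B_{0}$ has $\abs{B}<g_{0}$, hence $c_{B}=0$ by minimality of $B_{0}$ (faces never enter, since the sum runs only over $\Delta^{c}$). Thus only $B=B_{0}$ survives, giving
\begin{equation*}
  v_{B_{0}}(y) = 2^{\,n-g_{0}}\, c_{B_{0}}\, e_{B_{0}}(y), \qquad y\in\mathcal{X}_{B_{0}}.
\end{equation*}
Because $c_{B_{0}}\neq 0$ and $e_{B_{0}}(y)=\pm 1$, this marginal is nowhere zero, and its sign equals $\sign(c_{B_{0}})\,e_{B_{0}}(y)$.

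The second step is an elementary sign count. As $y$ ranges over $\mathcal{X}_{B_{0}}$, the value $e_{B_{0}}(y)=(-1)^{E(B_{0},y)}$ equals $+1$ for exactly $2^{g_{0}-1}$ configurations and $-1$ for the remaining $2^{g_{0}-1}$; hence $v_{B_{0}}$ is strictly positive on $2^{g_{0}-1}$ configurations and strictly negative on the other $2^{g_{0}-1}$. Whenever $v_{B_{0}}(y)>0$, the defining sum $\sum_{x\in\set{X_{B_{0}}=y}} v(x)$ is positive, so this cylinder set contains at least one $x$ with $v(x)>0$; cylinder sets over distinct $y$ are disjoint, so these witnesses are distinct and furnish at least $2^{g_{0}-1}$ positive components. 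The identical argument applied to the $2^{g_{0}-1}$ configurations with $v_{B_{0}}(y)<0$ yields at least $2^{g_{0}-1}$ negative components, which completes the bound.

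I expect the only real subtlety to be the selection of $B_{0}$ and the bookkeeping that makes the marginal collapse to a single character: one must verify that no strictly smaller non-face carries a non-zero coefficient and that faces contribute nothing, so that every character other than $e_{B_{0}}$ is annihilated. Everything afterward is the observation that a character on $B_{0}$ balances its signs exactly, together with the trivial fact that a non-zero fiber sum forces a same-signed entry in that fiber. I would stress that this route delivers the split into positive and negative parts \emph{directly}, and as a by-product reproves that $\abs{\supp v}\geq 2^{g}$, the maximal-zeros statement foreshadowed before Lemma~\ref{sec:supp-mark-basis-card-lemma}; one could instead extract that weaker total-support bound from that lemma, but it does not on its own separate the two signs.
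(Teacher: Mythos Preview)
Your proof is correct and takes a genuinely different route from the paper's. The paper argues by contradiction: assuming fewer than $2^{g-1}$ positive components, the non-positive index set has cardinality exceeding $2^{n}-2^{g-1}$, so Lemma~\ref{sec:supp-mark-basis-card-lemma} supplies a cylinder set $\set{X_{G\setminus\set{i}}=y_{G\setminus\set{i}}}$ (for a minimal non-face $G$ and some $i\in G$) on which $m\leq 0$; since $G\setminus\set{i}\in\Delta$, this marginal must vanish, forcing $m=0$ on the whole cylinder, and an inductive peeling via \eqref{eq:eb-sum-identity} then kills the coefficients layer by layer until $m=0$. You instead select $B_{0}$ as a minimal-cardinality set in $\Delta^{c}$ carrying a nonzero coefficient and compute the $B_{0}$-marginal directly, so that it collapses to a single nonzero character; each of the $2^{g_{0}-1}$ fibers with positive marginal then witnesses a positive entry, and likewise for the negative ones. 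Your argument is more direct, does not use Lemma~\ref{sec:supp-mark-basis-card-lemma} at all, and even gives the sharper bound $2^{g_{0}-1}\geq 2^{g-1}$ adapted to the actual support of the combination; the paper's approach, in exchange, makes explicit how the face structure of $\Delta$ and the cardinality lemma combine to annihilate the coefficients one cardinality at a time.
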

\begin{proof}
  Assume we have a linear combination
  \begin{equation}
    m = \sum_{B \in \Delta^{c}} z^{B} e_{B} \in \ker\pi_{\Delta}
  \end{equation}
  which has less then $2^{g-1}$ positive components. It has at least
  $2^{n}-2^{g-1} + 1$ non-positive components. Let $\mathcal{Y}_{\leq}
  \subset \mathcal{X}$ denote the corresponding indexes. Let $G
  \in \Delta^{c}$ have cardinality $g$ and choose $i\in G$
  arbitrary.  By Lemma \ref{sec:supp-mark-basis-card-lemma} we find a
  cylinder set $\set{X_{G \setminus \set{i}} = y_{G\setminus\set{i}}}$
  which is contained in $\mathcal{Y}_{\leq}$. We have
  \begin{equation}
    m(x) = \sum_{B \in \Delta^{c}} z^{B}e_{B}(x) \leq 0 
    \qquad x \in \mathcal{Y}_{\leq}.
  \end{equation}
  Summing up these equations over the cylinder set $\set{X_{G
      \setminus \set{i}} = y_{G\setminus\set{i}}}$ yields
  \begin{equation}
    \label{eq:sum-is-smaller-than-zero}
    \sum_{x \in \set{X_{G \setminus \set{i}} = 
        y_{G\setminus\set{i}}}} \sum_{B \in \Delta^{c}} z^{B}e_{B}(x) \leq 0.
  \end{equation}
  Note that this summation is in fact the computation of the marginal
  $m_{G\setminus\set{i}}$ evaluated at the value
  $y_{G\setminus\set{i}}$. Since $m \in \ker \pi_{\Delta}$, and
  $G\setminus\set{i} \in\Delta$, equality must hold
  in (\ref{eq:sum-is-smaller-than-zero}). We find that every term in
  the sum was already zero:
  \begin{equation}
    \sum_{B \in \Delta^{c}} z^{B}e_{B}(x) = 0 
    \qquad x \in \set{X_{G \setminus \set{i}} = 
      y_{G\setminus\set{i}}}
  \end{equation}
  We will now inductively show that $m = 0$.  Contained in $\set{X_{G
      \setminus \set{i}} = y_{G\setminus\set{i}}}$ we have a smaller
  set $\set{X_{G} = y_{G}}$. Summing up the respective components of
  $m$ for this set we find, using Lemma
  \ref{sec:supp-mark-basis-eA-lemma}, 
  \begin{equation}
  \begin{aligned}
    0 & = \sum_{x \in \set{X_{G} = y_{G}}}\sum_{B \in \Delta^{c}}
    z^{B}e_{B}(x) \\
     & = z^{G} 2^{n-g} e_G(x_G)
  \end{aligned}
  \end{equation}
  It follows that $z^{G} = 0$. Applying the same argument, we can
  show that all coefficients $z^{H}$ vanish for $\abs{H} = g$. 
  Inductively, we continue with sets of cardinality $g+1$. Finally,
  this argument yields that all coefficients vanish and $m$ is
  zero. The whole procedure applies, mutatis mutandis, for the negative
  components as well. 
\end{proof}

Lemma \ref{sec:positive-support-is-bounded-from-below} completes the
proof of Theorem \ref{sec:main-theorem-thm} in the binary case. It
shows that the degree of each Markov move is at least $2^{g-1}$. Since
in fact we have a lower bound for the support, the degree bound can
only be realized by square free binomials.

\subsection{The non-binary case}
\label{sec:non-binary-case}

We now study the non-binary case. Let $\mathcal{X} = \bigtimes_{i\in N}
\mathcal{X}_{i}$ be some arbitrary, finite configuration space.

\begin{definition}
  Let $\phi_{i} : \mathcal{X}_{i} \to \set{0,1}, i\in N$ be surjective
  maps. For each $B \subset N$, the composed maps
  \begin{equation}
  \begin{aligned}
    \label{eq:collapsing_definiton}
    \phi_{B} : \mathcal{X}_{B} & \to \set{0,1}^{B}
    \\
    x_{B} & \mapsto (\phi_{i}(x_{i}))_{i\in B}.
  \end{aligned}
  \end{equation}
  are called \emph{collapsing maps}. Abbreviating, put $\phi \defas
  \phi_{N}$.  We have an induced map on contingency tables:
  \begin{equation}
    \label{eq:induced_collapsing}
    \begin{aligned}
      \Phi : \mathbb{N}_{0}^{\mathcal{X}} & \to
      \mathbb{N}_{0}^{\set{0,1}^{N}} \\
      (u(x))_{x\in \mathcal{X}} & \mapsto \left( \sum_{w\in
          \phi^{-1}(z)} u(w) \right)_{z \in \set{0,1}^{N}}.
    \end{aligned}
  \end{equation}
\end{definition}
The key property of such a collapsing is that it commutes with
marginalization.
\begin{lemma}
  \label{sec:phi_identity_lemma}
  Let $u \in \mathbb{N}_{0}^{\mathcal{X}}$. For $B\subset N, z_{B} \in
  \set{0,1}^{B}$ it holds:
  \begin{equation}
    \sum_{x_{B} \in \phi_{B}^{-1}(z_{B})} \sum_{w\in \set{X_{B} =
        x_{B}}} u(w) = \sum_{y \in \set{X_{B} = z_{B}}} \sum_{w \in
        \phi^{-1} (y)} u(w).
  \end{equation}
\end{lemma}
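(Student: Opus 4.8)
The plan is to recognize both sides as the sum of $u(w)$ over one and the same subset of $\mathcal{X}$, merely organised by two different partitions of that subset. First I would rewrite each iterated sum as a single sum over $w$. On the left, for each $x_{B} \in \phi_{B}^{-1}(z_{B})$ the inner sum ranges over the cylinder $\set{X_{B} = x_{B}}$; since distinct $x_{B}$ yield disjoint cylinders, the entire left-hand side equals $\sum u(w)$ taken over all $w \in \mathcal{X}$ with $X_{B}(w) \in \phi_{B}^{-1}(z_{B})$, that is, with $\phi_{B}(X_{B}(w)) = z_{B}$. On the right, for each binary configuration $y$ satisfying $X_{B}(y) = z_{B}$ the inner sum ranges over the fibre $\phi^{-1}(y)$; since distinct $y$ have disjoint fibres, the right-hand side equals $\sum u(w)$ over all $w$ with $X_{B}(\phi(w)) = z_{B}$.

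The crux is then the commutation identity $\phi_{B} \circ X_{B} = X_{B} \circ \phi$ as maps $\mathcal{X} \to \set{0,1}^{B}$, where on the right $X_{B}$ denotes the projection of the binary space $\set{0,1}^{N}$ onto its $B$-coordinates. This is immediate from the coordinatewise definitions: for $w = (w_{i})_{i\in N}$ both composites evaluate to $(\phi_{i}(w_{i}))_{i\in B}$, because projecting to $B$ and then collapsing the $B$-coordinates produces the same tuple as collapsing all coordinates and then projecting. Consequently the two index sets $\set{w : \phi_{B}(X_{B}(w)) = z_{B}}$ and $\set{w : X_{B}(\phi(w)) = z_{B}}$ coincide, and both sides equal $\sum u(w)$ over this common set, as desired.

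The only point requiring care, and hence the main (though minor) obstacle, is verifying that the iterated sums on each side genuinely partition the relevant index set without double counting, so that the Fubini-style reindexing is legitimate. This reduces to the observation that the inner summation sets are pairwise disjoint as the outer index varies, which holds because $X_{B}$ and $\phi$ are honest functions and their fibres therefore partition the domain. I expect no analytic difficulty whatsoever; the content of the lemma is purely the bookkeeping of a commuting square of coordinatewise maps.
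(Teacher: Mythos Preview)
Your proposal is correct and follows essentially the same approach as the paper: both reduce the identity to the equality of the underlying index sets of $w$'s, after noting that each $w$ occurs at most once on either side. The paper phrases this as a direct two-inclusion proof of the set equality
\[
\bigcup_{x_{B} \in \phi_{B}^{-1}(z_{B})} \set{X_{B} = x_{B}} \;=\; \bigcup_{y \in \set{X_{B} = z_{B}}} \phi^{-1}(y),
\]
whereas you phrase it via the commuting square $\phi_{B}\circ X_{B} = X_{B}\circ\phi$; these are the same argument in different dress.
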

Note that for the cylinder set on the left hand side,
$\set{X_{B}=x_{B}} \subset \mathcal{X}$, while on the right hand side
$\set{X_{B}=z_{B}} \subset \set{0,1}^{N}$.
\begin{proof}
  Since on each side, every $w$ appears at most once, it suffices to
  show the set equality
  \begin{equation}
    \label{eq:set_equality}
    \bigcup_{x_{B} \in \phi_{B}^{-1}(z_{B})} \set{X_{B} = x_{B}} =
    \bigcup_{y \in \set{X_{B} = z_{B}}} \set{\phi^{-1} (y)}.
  \end{equation}
  \begin{description}
  \item[``$\subset$'':] Let $w$ from the left hand side be given. One
    has $X_{B}(w) = x_{B}$ for some $x_{B}$ with $\phi_{B}(x_{B}) =
    z_{B}$. Therefore $\phi(w) = y$ with $X_{B}(y) = z_{B}$ and $w$ is
    contained in the right hand side.  
  \item[``$\supset$'':] Let $w = \phi^{-1}(y), y\in \set{X_{B} =
      z_{B}}$ from the right hand side be given. We have $X_{B}(w) \in
    \phi^{-1}(z_{B})$, so $w$ is contained in the left hand side. 
  \end{description} 
\end{proof}

\begin{lemma}
\label{sec:marginalizationlemma}
Let $u,v \in \mathbb{N}_{0}^{\mathcal{X}}$ be contingency
tables. Denote the marginal map in the non-binary model as
$\pi_{\Delta}$, the corresponding binary one as $\rho_{\Delta}$. In
this case, $\pi_{\Delta}(u) = \pi_{\Delta}(v)$ implies $\rho_{\Delta}(\Phi(u)) =
\rho_{\Delta}(\Phi(v))$.
\end{lemma}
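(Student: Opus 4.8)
The plan is to reduce the claimed marginal identity for the collapsed tables directly to Lemma \ref{sec:phi_identity_lemma}, which already packages the commutation of collapsing with marginalization. The statement $\rho_{\Delta}(\Phi(u)) = \rho_{\Delta}(\Phi(v))$ means that for every facet $F \in \mathcal{F}$ of $\Delta$ and every binary configuration $z_F \in \set{0,1}^F$, the $F$-marginal of the collapsed table $\Phi(u)$ at $z_F$ equals that of $\Phi(v)$. So the whole thing will follow componentwise if I can show that each such marginal of $\Phi(u)$ is determined by the corresponding (non-binary) marginal of $u$.

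So first I would fix a facet $F$ and a value $z_F \in \set{0,1}^F$, and write out what the binary marginal $(\Phi(u))_F(z_F)$ actually is. By the definition of the marginal table it is $\sum_{y \in \set{X_F = z_F}} (\Phi(u))(y)$, where the cylinder set lives in $\set{0,1}^N$. Expanding the definition of $\Phi$ in \eqref{eq:induced_collapsing}, this becomes the double sum $\sum_{y \in \set{X_F = z_F}} \sum_{w \in \phi^{-1}(y)} u(w)$ — which is precisely the right-hand side of Lemma \ref{sec:phi_identity_lemma} applied with $B = F$ and $z_B = z_F$. The key move is then to invoke that lemma to rewrite this as the left-hand side,
\begin{equation}
  (\Phi(u))_F(z_F) = \sum_{x_F \in \phi_F^{-1}(z_F)} \sum_{w \in \set{X_F = x_F}} u(w) = \sum_{x_F \in \phi_F^{-1}(z_F)} u_F(x_F),
\end{equation}
where the inner sum is exactly the non-binary marginal $u_F(x_F)$ by its definition. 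Thus $(\Phi(u))_F(z_F)$ is a fixed linear combination (indexed by the fibre $\phi_F^{-1}(z_F)$) of the entries of the non-binary marginal table $u_F$, and the same combination applies to $v$.

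The conclusion is then immediate: the hypothesis $\pi_{\Delta}(u) = \pi_{\Delta}(v)$ says precisely that $u_F(x_F) = v_F(x_F)$ for every facet $F$ and every $x_F \in \mathcal{X}_F$. Since each binary marginal component of $\Phi(u)$ has been expressed as a sum of such $u_F(x_F)$ over the fibre $\phi_F^{-1}(z_F)$, and the identical expression holds for $v$, matching the underlying non-binary marginals forces $(\Phi(u))_F(z_F) = (\Phi(v))_F(z_F)$ for all $F \in \mathcal{F}$ and all $z_F$. Ranging over all facets and configurations gives $\rho_{\Delta}(\Phi(u)) = \rho_{\Delta}(\Phi(v))$.

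I do not expect a genuine obstacle here, since Lemma \ref{sec:phi_identity_lemma} carries the combinatorial weight; the proof is essentially just unwinding definitions and citing it at the right index. The one point requiring care is bookkeeping about which space each cylinder set lives in — $\set{X_F = x_F} \subset \mathcal{X}$ on the non-binary side versus $\set{X_F = z_F} \subset \set{0,1}^N$ on the collapsed side — and making sure the lemma is applied with $B$ ranging only over facets (so that $B \in \mathcal{F}$ indeed indexes the maps $\pi_\Delta$ and $\rho_\Delta$). As long as the fibre decomposition $\phi_F^{-1}(z_F)$ is handled consistently, the argument closes cleanly.
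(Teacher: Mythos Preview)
Your proposal is correct and follows essentially the same route as the paper: fix a facet and a binary configuration, expand the marginal of $\Phi(u)$ via the definition \eqref{eq:induced_collapsing}, apply Lemma~\ref{sec:phi_identity_lemma} to swap the order of collapsing and marginalization, and then invoke the hypothesis $\pi_\Delta(u)=\pi_\Delta(v)$. The paper's proof is terser but identical in substance.
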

\begin{proof}
  Let $B \in \Delta, z_{B} \in \set{0,1}^{N}$. We have to show that 
  \begin{equation}
    \label{eq:1}
    \sum_{y \in \set{X_{B} = z_{B}}} \Phi(u)(y) = 
    \sum_{y \in \set{X_{B} = z_{B}}} \Phi(v)(y).
  \end{equation}
  By definition this equation is 
  \begin{equation}
    \label{eq:3}
    \sum_{y \in \set{X_{B} = z_{B}}}
      \sum_{w \in \phi^{-1} (y)} u(w)
    = \sum_{y \in \set{X_{B} = z_{B}}}
      \sum_{w \in \phi^{-1} (y)} v(w).
  \end{equation}
  Using Lemma \ref{sec:phi_identity_lemma} and the hypothesis, the
  statement follows.
\end{proof}
\begin{proof}[Theorem \ref{sec:main-theorem-thm}]
  Using the collapsing map, from generators of the non-binary model we
  can construct relations in the corresponding binary model as
  follows. Consider the polynomial rings $\mathfrak{R} \defas
  \mathbb{C}[p_{x}: x\in \mathcal{X}]$ and $\mathfrak{Q} \defas
  \mathbb{C}[q_{z} : z \in \set{0,1}^{N}]$. Given a simplicial complex
  $\Delta$, denote $\mathcal{I}_{\Delta} \subset \mathfrak{R}$ the
  non-binary toric ideal and $\mathcal{J}_{\Delta} \subset
  \mathfrak{Q}$ the binary one.  

  To each binomial $p^{m^{+}} - p^{m^{-}} \in \mathfrak{R}$ associate
  the collapsed binomial $q^{\Phi(m^{+})} - q^{\Phi(m^{-})} \in
  \mathfrak{Q}$. By Lemma \ref{sec:marginalizationlemma} it is clear
  that elements in the toric ideal $\mathcal{I}_{\Delta}$ are mapped
  to $\mathcal{J}_{\Delta}$. Furthermore, the support of
  $q^{\Phi(m^{+})}$, and $q^{\Phi(m^{-})}$ respectively, will have
  smaller cardinality than the supports of $p^{m^{+}}$ and $
  p^{m^{-}}$. Finally, if the non-binary model had a generator
  violating the statement of the theorem, then we can choose the maps
  $\phi_{i} : i\in N$ in such a way that this generator gets mapped to
  a non-zero binomial which violates the statement for the binary
  case. This contradiction concludes the proof. 
\end{proof}

\section{Neighborliness}
\label{sec:neighb-an-appl}
Before stating the neighborliness property we will take another short
excursion to statistics, introducing so called exponential families
and their relation to marginal polytopes.

Let again $\Delta$ denote a simplicial complex. For each
$x\in\mathcal{X}$ we have $A_{x}$ the corresponding row of the
marginal matrix $A_{\Delta}$, as defined in
\eqref{eq:A-matrix-defn}. The exponential family associated to this
complex is the parametrized family of probability measures
\begin{equation}
\label{eq:exponential family}
\mathbb{R}^{\mathcal{X}} \supset \mathcal{E}_{\Delta} 
\defas \set{ p_{\theta}(x) = Z(\theta)^{-1} \exp
  \left(
    \left\langle \theta,A_{x} \right\rangle \right) : \theta \in
  \mathbb{R}^{d}
}.
\end{equation}
Here, $Z(\theta) \defas \sum_{x\in\mathcal{X}} \exp \left(
  \left\langle \theta,A_{x} \right\rangle \right)$ is a normalization,
called the partition function. Like in Section \ref{sec:introduction},
$d$ is the number of rows of $A_{\Delta}$. By construction an
exponential family is an open subset of the simplex of all probability
measures on $\mathcal{X}$. Typically one is interested in the closure
$\overline{\mathcal{E}_{\Delta}}$, which is taken with respect to the
usual topology of $\mathbb{R}^{n}$. The closure of
$\mathcal{E}_{\Delta}$ equals the non negative part of the toric
variety $V(\mathcal{I}_{\Delta})$ \cite[Theorem
3.2]{geigermeeksturmfels06}. By this fact, the Markov basis gives the
implicit equations, cutting out the set
$\overline{\mathcal{E}_{\Delta}}$.

We can now state our main result in two equivalent formulations:
\begin{theorem}
\label{sec:neighbor_theorem}
  Let $g$ be the minimal cardinality among the non-faces of $\Delta$.
  \begin{description}
  \item[Geometric Formulation:] The marginal polytope is
    $2^{g-1}-1$ neighborly.
  \item[Probabilistic Formulation:] Every probability measure $p$ with
    $\abs{\supp (p)} < 2^{g-1}$ is contained in
    $\overline{\mathcal{E}_{\Delta}}$.
  \end{description}
\end{theorem}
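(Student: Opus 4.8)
The plan is to establish the equivalence of the two formulations first and then prove one of them, leveraging Theorem \ref{sec:main-theorem-thm}. The connection between neighborliness and the closure of the exponential family runs through the face structure of the marginal polytope $Q_\Delta$: a standard fact from algebraic statistics is that the faces of $Q_\Delta$ correspond precisely to the supports of probability measures lying in $\overline{\mathcal{E}_\Delta}$. More concretely, a subset $S \subset \mathcal{X}$ is the support of some measure in $\overline{\mathcal{E}_\Delta}$ if and only if $\conv\set{A_x : x \in S}$ is a face of $Q_\Delta$. Recalling the definition, $Q_\Delta$ being $k$-neighborly means that every subset of at most $k$ of its vertices spans a face. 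So to prove the geometric formulation with $k = 2^{g-1}-1$, I would show that any set of at most $2^{g-1}-1$ columns $A_x$ forms a face, and the probabilistic formulation then follows by identifying the support of $p$ with such a vertex set.

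The main tool is the correspondence between non-faces of $Q_\Delta$ and the generators (Markov moves) of $\mathcal{I}_\Delta$. The key observation is that a set of vertices $\set{A_x : x \in S}$ fails to be a face exactly when there is a circuit or, more generally, a move $m \in \ker_{\mathbb{Z}} \pi_\Delta$ whose positive and negative supports both lie within $S$ — equivalently, when $S$ contains the support of some binomial in the ideal. First I would make precise the statement that $S$ spans a face if and only if $S$ does not contain the full support $\supp(p^{m^+} - p^{m^-})$ of any element of a Markov basis. The forward direction uses a separating hyperplane: if no move is supported on $S$, one constructs a linear functional in the row span of $A_\Delta$ that is maximized exactly on $\set{A_x : x \in S}$. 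The reverse direction is the easier observation that a move supported on $S$ produces an affine dependence among the vertices in $S$ with both positive and negative coefficients, obstructing the face property.

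With that correspondence in hand, the argument becomes short. By Theorem \ref{sec:main-theorem-thm}, applied via Theorem \ref{sec:sturmfelsmarkovtheorem} so that the generators are exactly the Markov moves, every element of a Markov basis has positive support of cardinality at least $2^{g-1}$, hence total support at least $2^{g-1} + 2^{g-1}$, and in particular support strictly larger than $2^{g-1}-1$. Therefore no set $S$ with $\abs{S} \leq 2^{g-1}-1$ can contain the support of any generator, so every such $S$ spans a face. This gives $2^{g-1}-1$ neighborliness. For the probabilistic formulation, a measure $p$ with $\abs{\supp(p)} < 2^{g-1}$ has support of size at most $2^{g-1}-1$, so $\conv\set{A_x : x \in \supp(p)}$ is a face, and by the support correspondence $p$ lies in $\overline{\mathcal{E}_\Delta}$.

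The step I expect to be the main obstacle is establishing the support correspondence rigorously — namely, the claim that $\set{A_x : x \in S}$ is a face of $Q_\Delta$ precisely when $S$ contains no generator support. The reverse implication is elementary, but the forward implication (constructing the supporting hyperplane when no move sits inside $S$) requires a careful argument: one must produce a functional $\theta$ with $\langle \theta, A_x\rangle$ constant on $S$ and strictly smaller off $S$, and the existence of such a $\theta$ is where the ideal-theoretic characterization of faces genuinely enters. I would either cite the standard correspondence between faces of $Q_\Delta$ and the supports of the exponential family (available from \cite{geigermeeksturmfels06} and the general theory of toric ideals in \cite{hostensullivant02a}) or supply the hyperplane construction directly, taking care that the functional lies in the row span so that it corresponds to a genuine choice of parameters $\theta$.
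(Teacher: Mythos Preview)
Your intermediate characterization of faces is incorrect, and this is a genuine gap. You claim that $\conv\{A_x : x \in S\}$ fails to be a face of $Q_\Delta$ precisely when $S$ contains the \emph{full} support of some move $m\in\ker_{\mathbb Z}\pi_\Delta$, i.e.\ when both $\supp(m^+)\subset S$ and $\supp(m^-)\subset S$. This is false already for the independence model on two binary variables: the marginal polytope is a square, the unique move has $\supp(m^+)=\{(00),(11)\}$ and $\supp(m^-)=\{(01),(10)\}$, and the set $S=\{(00),(11)\}$ contains no full move support yet is a diagonal, not a face. The correct facial criterion is that $S$ is the vertex set of a face iff for every move $m$ one has $\supp(m^+)\subset S \Leftrightarrow \supp(m^-)\subset S$; equivalently, $S$ fails to be a face exactly when some move has \emph{one} of its two supports inside $S$ and the other not. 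Your ``forward direction'' (constructing a supporting hyperplane when no move is fully supported on $S$) is precisely the step that breaks in the square example, so the hyperplane construction you anticipate cannot succeed as stated.

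With the correct criterion your argument can be salvaged: if $|S|\le 2^{g-1}-1$ then by Theorem~\ref{sec:main-theorem-thm} neither $\supp(m^+)$ nor $\supp(m^-)$ (each of size $\ge 2^{g-1}$) can fit inside $S$, so the facial condition holds vacuously. Note that the relevant bound is on each \emph{half} of the support, not on the total support $\ge 2^g$ that you invoke. The paper, by contrast, bypasses the face characterization entirely for the main step: it proves the probabilistic formulation directly by observing that if $|\supp(p)|<2^{g-1}$ then every monomial $p^{m^\pm}$ in every generator has at least one zero factor, so all defining binomials vanish at $p$ trivially and $p\in V(\mathcal I_\Delta)\cap\{\text{probability measures}\}=\overline{\mathcal E_\Delta}$. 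Only afterwards is the face--support correspondence cited, and only in the direction needed to pass from the probabilistic to the geometric formulation.
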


\begin{proof} The probabilistic formulation is easy to see. Just observe
  that by Theorem \ref{sec:main-theorem} each monomial appearing in
  the set of generators $\set{p^{m^+} - p^{m^-} : m \in M} $ has
  cardinality of its support bounded from below by
  $2^{g-1}$. Therefore a $p$ with $\abs{\supp (p)} < 2^{g-1}$ must
  fulfill the defining equations trivially.

  Now, the geometric formulation is due to the well known fact that a
  set $\mathcal{Y} \subset \mathcal{X}$ is the support set of some $p
  \in \overline{\mathcal{E}_{\Delta}}$ if and only if $\conv \set{
    A_{y} : y \in \mathcal{Y}}$ is a face of the marginal polytope
  $Q_{\Delta}$. This is a consequence of the fact that the marginals
  computed by $A_{\Delta}$ form a sufficient statistics for the
  exponential family $\mathcal{E}_{\Delta}$. 
\end{proof}  

\begin{remark}[The bound is sharp]
  On first sight one would maybe expect a better neighborliness
  property in the non-binary cases, for instance if every variable is
  ternary. However, one can easily see that the bound is sharp in the
  sense that already for the ``no-three-way-interaction'' model with
  ternary variables, given by $N=\set{1,2,3},
  \mathcal{X}_{i}=\set{0,1,2}$ for $i=1,2,3$ and $\Delta = \set{ B
    \subset \set{1,2,3} : \abs{B} \leq 2}$, one has square-free
  generators of degree 4. They can easily be computed with 4ti2
  \cite{4ti2} or looked up in the Markov Bases Database
  \cite{MBDB}. Then a $p$ supported exactly on the positive support
  is a counterexample for any improvement of Theorem
  \ref{sec:neighbor_theorem}.
\end{remark}

\begin{remark}[Maximizing Multiinformation]
  The so called Multiinformation is an entropic quantity which
  generalizes mutual information to more than two variables. Denoting
  $H(p) \defas -\sum_{x\in \mathcal{X}} p(x)\log p(x)$ the entropy of
  $p$, and $H_{i}(p) \defas -\sum_{x\in \mathcal{X}_{i}} p_{\set{i}}(x)\log
  p_{\set{i}}(x)$ the marginal entropy for $i\in N$, it is defined as
\begin{equation}
  \label{eq:multiinfodef}
  MI(p) \defas \sum_{i\in N} H_{i}(p) - H(p)
\end{equation}
An interesting problem, considered in \cite{ayknauf06}, is to maximize
this function. There, all global maximizer in the binary case are
classified giving there support sets. In particular, by \cite[Theorem
3.2]{ayknauf06} all global maximizer $p^{*}$ satisfy
\begin{equation}
  \label{eq:supportbound}
  \abs{\supp (p^{*})} = 2. 
\end{equation} Let $\Delta_{2} \defas \set{B\subset N : \abs{B} \leq 2}$
denote the uniform simplicial complex of order two, then it is shown
\begin{corollary}[Theorem 3.5 in \cite{ayknauf06}]
  All global maximizer of $MI$ are contained in
  $\overline{\mathcal{E}_{\Delta_{2}}}$.
\end{corollary}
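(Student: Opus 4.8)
The plan is to recognize the statement as an immediate consequence of the probabilistic formulation of Theorem \ref{sec:neighbor_theorem}, fed with the support bound \eqref{eq:supportbound} that is imported from \cite{ayknauf06}. No new machinery is needed; the entire content has already been established upstream, and the proof amounts to computing the relevant parameter $g$ for the complex $\Delta_2$ and checking that the known support size of a maximizer falls below the neighborliness threshold.

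First I would determine $g$, the minimal cardinality of a non-face of $\Delta_2 = \set{B \subset N : \abs{B} \leq 2}$. By definition the faces of $\Delta_2$ are precisely the subsets of $N$ of cardinality at most two, so its non-faces are exactly the subsets of cardinality at least three. Hence, provided $n \geq 3$, the smallest non-face has three elements and $g = 3$, giving $2^{g-1} = 2^{2} = 4$. (In the degenerate case $n \leq 2$ the complex $\Delta_2$ is the full power set $2^{N}$, the model is saturated, $\overline{\mathcal{E}_{\Delta_2}}$ is the entire probability simplex, and the assertion holds trivially.) Substituting $g = 3$ into the probabilistic formulation of Theorem \ref{sec:neighbor_theorem} yields that every probability measure $p$ with $\abs{\supp(p)} < 4$ lies in $\overline{\mathcal{E}_{\Delta_2}}$.

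Next I would invoke the classification of global maximizers in the binary case, namely Theorem 3.2 of \cite{ayknauf06} as recorded in \eqref{eq:supportbound}: every global maximizer $p^{*}$ of the Multiinformation satisfies $\abs{\supp(p^{*})} = 2$. Since $2 < 4 = 2^{g-1}$, each such $p^{*}$ meets the support hypothesis of the neighborliness theorem, and therefore $p^{*} \in \overline{\mathcal{E}_{\Delta_2}}$, which is the claim.

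There is no genuinely hard step here; the only points requiring care are bookkeeping ones. One must confirm that we remain in the binary setting $\mathcal{X} = \set{0,1}^{N}$, where the cited support bound applies, and one should dispatch the boundary case $n \leq 2$ separately as above so that the formula $g = 3$ is legitimately in force. Beyond that, the result is a direct specialization of Theorem \ref{sec:neighbor_theorem}.
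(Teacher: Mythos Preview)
Your proposal is correct and follows exactly the approach the paper intends: the paper simply remarks that, in view of the support bound \eqref{eq:supportbound}, the corollary follows from Theorem~\ref{sec:neighbor_theorem}, and you have spelled out the implicit computation $g=3$, $2^{g-1}=4>2$ (together with the trivial case $n\le 2$).
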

In view of the bound on the cardinality of the support, this now also
follows from our Theorem \ref{sec:neighbor_theorem}.
\end{remark}

\section{Markov Bases of high dimensional models}
\label{sec:markov-bases-high}
Finally, in this last section, we will show an example where the moves
$m_{G}^{y_{N\setminus G}}$ already constitute the full Markov
Basis. Consider again the binary case $\mathcal{X} =
\set{0,1}^{n}$. Let $G \subset N$. We denote
\begin{equation}
\label{eq:minus-interval-complex}
  \Delta_{\slash G} \defas \set{B \subset N : B \not\supset G},
\end{equation}
the complex of all sets not containing $G$. We have seen that the
toric ideal for this complex is generated in degree at least
$2^{\abs{G}-1}$.  In this section we show, that if $\Delta$ has the
structure (\ref{eq:minus-interval-complex}) and the variables are
binary, the Markov basis is given by the moves $m^{y_{N\setminus
    G}}_{\mathcal{G}}$ as defined in (\ref{eq:markov-moves}), and
therefore $I_{\Delta_{\slash G}}$ is generated in exactly degree
$2^{\abs{G}-1}$. As the no-three-way interaction model is of the form
(\ref{eq:minus-interval-complex}) it is also clear the statement of
the following theorem does not hold as soon as the variables are not
binary.

\begin{theorem}
  Let $\mathcal{G} = \left[G,N\right]$. 
  A Markov basis of the binary hierarchical model given by
  $\Delta_{\slash G}$ is 
  \begin{equation}
    M \defas \set{m_{\mathcal{G}}^{y_{N\setminus G}} : y_{N \setminus G}
      \in \mathcal{X}_{N\setminus G }}.
  \end{equation}
\end{theorem}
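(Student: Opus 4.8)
The plan is to show that the set $M$ generates all of $\ker_{\mathbb{Z}}\pi_{\Delta_{\slash G}}$ as a lattice, and moreover that any two tables with equal marginals can be connected by moves from $\pm M$ without leaving the non-negative cone. The first task is essentially algebraic: observe that $\Delta_{\slash G}^{c}$ is exactly the upper interval $\mathcal{G}=[G,N]$, since the non-faces of $\Delta_{\slash G}$ are precisely the sets containing $G$. By the basis lemma of \cite{kahleay06}, $\set{e_{B}:B\in\mathcal{G}}$ is a $\mathbb{Z}$-basis of $\ker_{\mathbb{Z}}A_{\Delta_{\slash G}}$, which has rank $\abs{\mathcal{G}}=2^{n-g}$. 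The moves $m_{\mathcal{G}}^{y_{N\setminus G}}$ are also indexed by the $2^{n-g}$ configurations $y_{N\setminus G}\in\mathcal{X}_{N\setminus G}$, so there are exactly the right number of them. First I would verify that these moves span the kernel over $\mathbb{Z}$ (indeed form a basis), by checking that the change-of-basis matrix relating $\set{m_{\mathcal{G}}^{y_{N\setminus G}}}$ to $\set{e_{B}:B\in\mathcal{G}}$ is unimodular; this should follow from \eqref{eq:markov-moves}, since up to the factor $2^{n-g}e_{G}$ the moves are a Hadamard-type transform of the characters, and the relevant Kronecker structure is invertible over $\mathbb{Z}$.

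Spanning the lattice is necessary but not sufficient: the defining property of a Markov basis requires connectivity within the non-negative cone. The heart of the argument is therefore to show that whenever $u,v\in\mathbb{N}_{0}^{\mathcal{X}}$ satisfy $\pi_{\Delta_{\slash G}}(u)=\pi_{\Delta_{\slash G}}(v)$ and $u\neq v$, there is some move $m=\pm m_{\mathcal{G}}^{y_{N\setminus G}}$ with $u+m\in\mathbb{N}_{0}^{\mathcal{X}}$ and $u+m$ strictly closer to $v$ in, say, the $\ell^{1}$ distance. The key structural fact is that each move $m_{\mathcal{G}}^{y_{N\setminus G}}$ is supported exactly on the cylinder set $\set{X_{N\setminus G}=y_{N\setminus G}}$, a fiber of size $2^{g}$ over the coordinates in $N\setminus G$, and on that fiber it equals $2^{n-g}e_{G}(x_{G})$, i.e. up to sign it is $\pm 1$ on each point with the sign pattern of the character $e_{G}$. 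Since every facet of $\Delta_{\slash G}$ misses at least one element of $G$, the marginal constraints fix, for each configuration $y_{N\setminus G}$, the full marginal of the restriction $u\vert_{\set{X_{N\setminus G}=y_{N\setminus G}}}$ onto every proper subset of $G$. Thus on each such fiber the difference $u-v$ is itself a multiple of $e_{G}$, and applying the corresponding move $m_{\mathcal{G}}^{y_{N\setminus G}}$ reduces this difference.

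The remaining step is to ensure that the reduction can be carried out staying non-negative, i.e. that one can always apply a move in the direction that decreases the positive and increases the negative entries without producing a negative coordinate. I would argue that on a fiber where $u-v=c\cdot e_{G}$ with $c\neq 0$, the move in the sign direction $-\sign(c)$ decreases $\norm{u-v}_{1}$ and, because the entries being decreased are exactly those where $u$ exceeds $v$ (hence are strictly positive), non-negativity is preserved. Iterating fiber by fiber terminates since $\norm{u-v}_{1}$ strictly decreases at each step. By Theorem \ref{sec:sturmfelsmarkovtheorem}, connectivity in the cone for all such pairs is equivalent to $M$ being a Markov basis, so once both spanning and the non-negative connectivity are established the proof is complete.

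The main obstacle I anticipate is the connectivity-in-the-cone argument rather than the lattice-spanning: one must make precise that the marginal constraints of $\Delta_{\slash G}$ force the fiberwise difference to lie exactly in the one-dimensional span of $e_{G}$, and that this one free degree of freedom per fiber is precisely absorbed by a single move. The clean way to see the ``force'' is that $G$ is the unique minimal non-face, so $G\setminus\set{i}\in\Delta_{\slash G}$ for every $i\in G$, giving access to all lower marginals on the fiber; the argument from Lemma \ref{sec:positive-support-is-bounded-from-below} then adapts to show the difference is a character multiple. Verifying the unimodularity claim cleanly and handling the bookkeeping of signs across fibers are the routine-but-delicate points to get right.
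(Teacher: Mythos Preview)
Your fiberwise argument is correct and is essentially a cleaner repackaging of the paper's proof. The paper argues by degree reduction on binomials in tableau notation: starting from a configuration in $\supp(u)$, it uses the facet marginals $N\setminus\{i\}$ (for $i\in G$) to force the corresponding odd-parity configurations into $\supp(v)$, then bounces back to force even-parity configurations into $\supp(u)$, eventually exhibiting a full move $m_{\mathcal{G}}^{y_{N\setminus G}}$ as a factor and recursing on the lower-degree remainder. Your version makes the underlying structure explicit up front: the fibers $\set{X_{N\setminus G}=y_{N\setminus G}}$ decouple, on each fiber the constraints are exactly the ``all proper subsets of $G$'' marginals, whose kernel is the line through $e_{G}$, and a single signed move reduces $\norm{u-v}_{1}$ while staying non-negative. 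Both arguments rest on the same observation; yours buys a more transparent termination measure and avoids the tableau bookkeeping.

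Two small points to clean up. First, the unimodularity claim is false as stated: the change-of-basis matrix from $\set{m_{\mathcal{G}}^{y}}$ to $\set{e_{B}:B\in\mathcal{G}}$ is a $\pm 1$ Hadamard-type matrix of size $2^{n-g}$, with determinant a power of $2$, not $\pm 1$; relatedly, the $e_{B}$ are an orthogonal basis of the real kernel but not a $\mathbb{Z}$-basis of the integer kernel (the primitive moves themselves already witness this). This does not matter, since your Part~2 is self-contained and implies lattice generation anyway---indeed your fiber argument directly shows that the primitive moves form a $\mathbb{Z}$-basis of $\ker_{\mathbb{Z}}A_{\Delta_{\slash G}}$. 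Second, watch the normalization: as defined in \eqref{eq:markov-moves} the move takes values $\pm 2^{n-g}$ on its support, not $\pm 1$; for the Markov-basis/connectivity argument you want the primitive version $2^{-(n-g)}m_{\mathcal{G}}^{y_{N\setminus G}}$, and your non-negativity check (``entries being decreased are exactly those where $u$ exceeds $v$'') is correct for that version.
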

\begin{proof}
  We apply the standard technique\cite{develinsullivant03} of reducing
  the degree of a given binomial via the moves in $M$. For convenience
  we introduce tableau notation\cite{hostensullivant02a}
  for monomials. In this notation, the monomial $p^{u}$ is
  represented by listing each $x \in \mathcal{X}$,  $u(x)$ times. For
  example $p_{000}p_{110}p_{111}^{2}$ will be written as the tableau
  \begin{equation}
    \begin{bmatrix}
      0 0 0 \\
      1 1 0 \\
      1 1 1 \\
      1 1 1 \\
    \end{bmatrix}.
  \end{equation}

  Assume $p^{u}-p^{v} \in \mathcal{I}_{\Delta_{\slash G}}$.  Without
  loss of generality we assume that $G = \set{l,\ldots,n}$.  We can
  assume that $u$ and $v$ have disjoint supports, otherwise we write
  $p^{u}-p^{v} = q (p^{u'} - p^{v'})$ and the following argument shows
  that $p^{u'} - p^{v'}$ can be expressed in terms of the Markov
  basis. 
  Consider first the case $u(00\ldots0) \geq 1$. Since
  the marginals on the $n-1$ sets
  \begin{equation}
    \label{eq:marginalsets}
    \set{1,2,\ldots,n-1},\set{1,2,\ldots,n-2,n},
    \ldots,\set{1,2,\ldots,l-1,l+1,\ldots,n}
  \end{equation}
  of $u$ and $v$ coincide, and $v(00\ldots0) =0$ we find that the
  given binomial has the form
  \begin{equation}
    \begin{bmatrix}
      00\ldots 0\underline{000\ldots0} \\
      \ldots \\
      \vdots \\
      \ldots \\
      \vdots
    \end{bmatrix}
    - 
    \begin{bmatrix}
      00\ldots 0\underline{100\ldots 0} \\
      00\ldots 0\underline{010\ldots 0} \\
      \vdots \\
      00\ldots 0\underline{000\ldots 1} \\
      \vdots
    \end{bmatrix}
  \end{equation}
  where the set $G$ is underlined.  Applying the same argument in the
  other direction, namely that, since $u$ has the same $n-1$ marginals
  on the sets (\ref{eq:marginalsets}) we find that $u(x)>0$ for any
  $x$ which has exactly two non-zero positions, both lying in $G$,
  formally $u(x)>0$ for any $x$ with $\supp(x) \subset G$ and
  $\abs{\supp(x)}=2$. We continue to find that $v(x) > 0$ for any $x$
  with $\supp(x) \subset G$ and $\abs{\supp(x)} = 3$. Repeating this
  argument we find that $p^{u}$ contains all configurations with zero
  outside $G$ and an even number of ones in $G$. Conversely, $p^{v}$
  contains all configurations that are zero outside $G$ and have
  an odd number of ones in $G$. All together, this is exactly the move
  $m_{\mathcal{G}}^{00\ldots0}$.  Obviously, in the general case, if
  in the beginning we would have started with some other configuration
  instead of $00\ldots0$, say $y$, the same argument leads to the move
  $m_{\mathcal{G}}^{y_{N\setminus G}}$ instead. Abbreviating the
  specific move as $m$ now, we write $p^{u} = K p^{m^{+}}$ and $p^{v}
  = L p^{m^{-}}$ with some monomials $K,L$ and have
  \begin{equation}
    \begin{aligned}
      p^{u} - p^{v} & = K p^{m^{+}} - L p^{m^{-}} + K p^{m^{-}} - K p^{m^{-}} \\
      & = K (p^{m^{+}} - p^{m^{-}}) - (L - K) p^{m^{-}}.
    \end{aligned}
  \end{equation}
  The degree of $L-K$ is obviously smaller than the degree of
  $p^{u}-p^{v}$. Inductively it follows that $p^{u}-p^{v}$ can be
  written as combination of the moves $m_{\mathcal{G}}^{y_{N\setminus
      G}}$.
\end{proof}

\section*{Acknowledgment}
\label{sec:acknowledgment}
The author is supported by the Volkswagen Foundation. Furthermore, he
wishes to express his gratitude to Johannes Rauh and Nihat Ay for
many valuable discussions.

\providecommand{\bysame}{\leavevmode\hbox to3em{\hrulefill}\thinspace}
\providecommand{\MR}{\relax\ifhmode\unskip\space\fi MR }
\providecommand{\MRhref}[2]{%
  \href{http://www.ams.org/mathscinet-getitem?mr=#1}{#2}
}
\providecommand{\href}[2]{#2}

\end{document}